\documentclass[reqno,twoside,a4paper]{article}
\usepackage[margin=1in]{geometry}
\usepackage{amsmath,amssymb,amsthm,mathtools}
\usepackage{enumitem}
\usepackage[colorlinks=true,linkcolor=blue,citecolor=blue,urlcolor=blue]{hyperref}
\usepackage{authblk}
\usepackage{orcidlink}
\usepackage{fancyhdr}
\newcommand{\Z}{\mathbb{Z}}
\newcommand{\N}{\mathbb{N}}
\newcommand{\F}{\mathbb{F}}

\newcommand{\Pe}{\mathcal{P}}
\newcommand{\m}{\mathfrak{m}}

\newcommand{\supp}{\operatorname{supp}}

\newcommand\blfootnote[1]{%
  \begingroup
  \renewcommand\thefootnote{}\footnote{\hspace{-1.8em}#1}%
  \addtocounter{footnote}{-1}%
  \endgroup
}

\theoremstyle{plain}
\newtheorem{theorem}{Theorem}[section]

\newtheorem{lemma}[theorem]{Lemma}
\newtheorem{corollary}[theorem]{Corollary}
\newtheorem*{problem}{Problem}

\theoremstyle{definition}
\newtheorem{definition}[theorem]{Definition}

\newtheorem*{acknow}{\textup{Acknowledgement}}

\theoremstyle{remark}
\newtheorem{remark}[theorem]{Remark}

\title{Prime Density and Classification of Mac\'ias Spaces over Principal Ideal Domains}

\author{Souvik Mandal$^{\,a, *}$ \orcidlink{0009-0004-0429-3063}}
\author{Ankur Sarkar$^{\,a}$ \orcidlink{0009-0002-3320-2568}}

\affil{\footnotesize\itshape $^{a}$Department of Mathematics, Indian Institute of Technology Madras, \\ 
\footnotesize\itshape Chennai 600036, Tamil Nadu, India.}
    
\date{}
\begin{document}
\maketitle
\blfootnote{$^{*}$ \textit{Corresponding author.}}
\blfootnote{\raggedright\hspace{0.50em}
  \makebox[8.5em][l]{\textit{E-mail addresses:}}%
  \begin{minipage}[t]{0.75\textwidth}
    \texttt{ma22d014@smail.iitm.ac.in}, \texttt{ssouvik.xyz@gmail.com} (S. Mandal); \\
    \texttt{ankurimsc@gmail.com} (A. Sarkar).
  \end{minipage}}
  \vspace{-3.5em}
\begin{abstract}
\noindent Recently, the Mac\'ias topology has been generalized over integral domains that are not fields, to furnish a topological proof of the infinitude of prime elements under the assumption that the set of units is finite or not open. In this article, we remove this cardinality assumption completely by using the Jacobson radical. We prove that in any semiprimitive integral domain, the group of units is not open in the Mac\'ias topology. Consequently, for a principal ideal domain, this gives an equivalence between the triviality of the Jacobson radical, the density of the set of prime elements, and the group of units not being open in the Mac\'ias topology. Furthermore, we completely characterize when Mac\'ias spaces over different infinite principal ideal domains are homeomorphic in terms of cardinalities of certain subsets of the domains. As an application we resolve an open problem concerning homeomorphism of Mac\'ias spaces over countably infinite semiprimitive principal ideal domains.
\end{abstract}

\vspace{-0.5em}
\begin{center}
\begin{minipage}{0.845\textwidth}
    \footnotesize
    \begin{list}{}{%
        \leftmargin=4.3em 
        \labelwidth=5em
        \labelsep=0pt \parsep=0pt \topsep=0pt \itemsep=0pt
    }
        \item[\textit{Keywords:}\hfill]Mac\'ias topology, Principal ideal domain, Jacobson radical, Semiprimitive ring, Prime density, Infinitude of primes, Homeomorphism classification.
    \end{list}

    \vspace{2pt} 

    \begin{list}{}{%
        \leftmargin=17.8em 
        \labelwidth=18.5em
        \labelsep=0pt \parsep=0pt \topsep=0pt \itemsep=0pt
    }
        \item[2020\hspace{1mm}\textit{Mathematics Subject Classification:}\hfill] Primary 54A05, 13F10, 16N20;\\ Secondary 13G05.
    \end{list}
\end{minipage}
\end{center}

\section{Introduction}

The intersection of topology and number theory has a rich history, most famously inaugurated by Furstenberg's 1955 topological proof of the infinitude of prime numbers \cite{Furstenberg}. Furstenberg's topology on $\mathbb{Z}$, generated by arithmetic progressions, is Hausdorff and metrizable. In contrast, Golomb \cite{Golomb} and Kirch \cite{Kirch} explored topologies on $\mathbb{N}$ that are connected but not Hausdorff, emphasising the arithmetic nature of open sets.

In 2024, Mac\'ias introduced a coarser topology on $\mathbb{N}$ generated by sets of integers coprime to a fixed integer \cite{MaciasIntegers}. This notion was subsequently extended to arbitrary integral domains $R$, resulting in the topology now known as the Mac\'ias topology \cite{MaciasDomains}.


A primary application of this topology in \cite{MaciasDomains} was to provide a topological proof of the infinitude of prime elements in principal ideal domains under the assumption that the group of units is finite.

In this article, we demonstrate that the aforementioned finiteness hypothesis can be completely removed by using the Jacobson radical. In fact, our contributions are as follows:
\begin{enumerate}[label=\textup{(\roman*)}]
\item We develop a prime support framework for the basic open sets of the 
Mac\'ias topology over unique factorization domains, bridging the 
arithmetic and topological structures (see Section~\ref{sec:prelim}). This framework serves as a 
key tool in the proofs of both Theorem~\ref{thm:density_equivalence} 
and Theorem~\ref{thm:grand_classification}.
\item We prove that an integral domain that is not a field is semiprimitive (that is, has a trivial Jacobson radical) if and only if its group of units is not open in the Mac\'ias topology. For principal ideal domains, we further establish that this condition is equivalent to two additional properties: the topological density of the set of associate classes of primes, and the infinitude of prime elements (see Section~\ref{sec:density}).
\item We provide a complete classification of Mac\'ias spaces over infinite 
principal ideal domains, proving that two such spaces are homeomorphic 
if and only if their groups of units have the same cardinality and their sets of associate classes of primes have the same cardinality, thereby resolving an open Problem~4.1 
posed in \cite{MaciasDomains} (see Section~\ref{sec:classification}).
\end{enumerate}

\section{Preliminaries}\label{sec:prelim}
Throughout this paper, $R$ denotes a commutative integral domain with 
identity $1 \neq 0$ that is not a field, unless otherwise stated; in 
particular, $R$ is necessarily infinite. For any subset $X \subseteq R$, 
we write $X^0 = X \setminus \{0\}$ for the set of its non-zero elements. 
In particular, $R^0$ denotes the punctured ring $R \setminus \{0\}$, and 
$U(R)$ denotes its group of units. 

Two elements $a, b \in R$ are called \emph{associates}, written $a \sim b$, 
if $a = ub$ for some $u \in U(R)$. We write $\Pe_R$ for the set of associate classes of prime elements of $R$; 
for a prime $p$, its class is denoted by $[p] \in \Pe_R$. We denote by 
$\operatorname{Fin}(\Pe_R)$ the collection of all finite subsets of $\Pe_R$.


\begin{definition}[{\cite{MaciasDomains}}]\label{def:macias}
For each $k \in R^0$, define 
\[
\sigma_k^0 := \{s \in R^0 : \langle k \rangle + \langle s \rangle = R\},
\]
where $\langle k \rangle$ denotes the principal ideal generated by $k$. Let, $\mathcal{B} = \{\sigma_k^0 : k \in R^0\}$. Following \cite[Theorem~2.1]{MaciasDomains} the set $\mathcal{B}$ forms a basis for some topology on $R$.
The \emph{Mac\'ias space over $R$}, denoted by $M(R)$, is the topological space $(R^0, \tau_{R^0})$, where $\tau_{R^0}$ is the topology generated by the basis $\mathcal{B}$.
\end{definition}
Various topological properties of the Maci\'as space $M(R)$ have been studied in $\cite{MaciasDomains}$. In particular, it is not a Hausdorff space, and hence is not metrizable. Note that $R^{0}$ is itself a basic open set since, for any unit $u$ of $R$, we have $\sigma^{0}_{u}=R^{0}$. 

To connect the algebraic structure with the topological constructions, we introduce 
the notion of prime support, applicable whenever $R$ is a unique factorization domain.
\begin{definition}\label{def:support}
Let $R$ be a unique factorization domain. For any $\alpha \in R^{0}$, the 
\emph{prime support} of $\alpha$, denoted $\supp(\alpha)$, is the set of associate 
classes of prime elements that divide $\alpha$. In particular, $\supp(u) = \emptyset$ 
for every unit $u \in U(R)$. 
\end{definition}
\begin{remark}\label{rem:support_finite}
When $R$ is a unique factorization domain, every non-zero non-unit admits a unique 
factorization into finitely many prime elements, ensuring that 
$\supp(\alpha) \in \operatorname{Fin}(\Pe_R)$ for all $\alpha \in R^0$. We note 
that in an integral domain where every non-unit factors into finitely many 
irreducibles but these irreducibles are not necessarily prime, the set 
$\supp(\alpha)$ may be empty even for non-units.
\end{remark} 

The following pair of lemmas establishes the relationship between prime support and the basic open sets of the Mac\'ias topology in a unique factorization domain.
\begin{lemma}\label{lem:support_forward}
Let $R$ be a unique factorization domain. For any $s, k \in R^0$, if $s \in \sigma_k^0$, then $\supp(s) \cap \supp(k) = \emptyset$.
\end{lemma}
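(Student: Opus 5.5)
We argue by contradiction, using only the definition of $\sigma_k^0$ and the fact that a prime element generates a proper ideal. Suppose $s \in \sigma_k^0$, so $\langle k \rangle + \langle s \rangle = R$. Suppose also that $\supp(s) \cap \supp(k) \neq \emptyset$. Then there is an associate class $[p] \in \Pe_R$ with $p \mid s$ and $p \mid k$.

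The divisibility condition does not depend on the representative. If $p' = up$ with $u \in U(R)$, then $p \mid x$ if and only if $p' \mid x$. So we may fix any prime representative $p$ of the common class.

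Since $p \mid k$ and $p \mid s$, we have $\langle k \rangle \subseteq \langle p \rangle$ and $\langle s \rangle \subseteq \langle p \rangle$. Hence
\[
R = \langle k \rangle + \langle s \rangle \subseteq \langle p \rangle .
\]
In particular $1 \in \langle p \rangle$, so $1 = pr$ for some $r \in R$, and $p$ is a unit. This contradicts the definition of a prime element, which is a non-zero non-unit. Therefore $\supp(s) \cap \supp(k) = \emptyset$.

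There is no real obstacle here. The only point to check is the independence of the associate-class representative noted above. The unique factorization hypothesis is not even used in this direction beyond making $\supp$ defined as in Definition~\ref{def:support}; presumably it matters for the converse lemma, where a non-unit must have a prime divisor.
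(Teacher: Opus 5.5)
Your proof is correct and is essentially the paper's argument. The paper writes $xs+yk=1$ and notes that $p$ would divide $1$, while you phrase the same step as the ideal containment $\langle k\rangle+\langle s\rangle\subseteq\langle p\rangle$, which forces $p$ to be a unit.
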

\begin{proof}
Since $s \in \sigma_k^0$, there exist $x, y \in R$ with $xs + yk = 1$. 
If a prime $p$ divides both $s$ and $k$, then $p$ divides $xs + yk = 1$, which is a contradiction.
Hence $\supp(s) \cap \supp(k) = \emptyset$. 
\end{proof}
\begin{remark}\label{rem:converse_fails}
The converse of Lemma~\ref{lem:support_forward} fails in general. In $\Z[x]$, the elements $2$ and $x$ are prime with $\supp(2) \cap \supp(x) = \emptyset$, yet $\langle 2 \rangle + \langle x \rangle = \langle 2, x \rangle \subsetneq \Z[x]$, so $2 \notin \sigma_x^0$. Establishing the converse requires a stronger algebraic structure, such as a principal ideal domain.
\end{remark}
\begin{lemma}\label{lem:support_reverse_pid}
Let $R$ be a principal ideal domain. For any $s, k \in R^0$, if $\supp(s) \cap \supp(k) = \emptyset$, then $s \in \sigma_k^0$.
\end{lemma}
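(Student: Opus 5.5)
The plan is to use that in a principal ideal domain the sum $\langle k\rangle + \langle s\rangle$ is itself principal, say $\langle k\rangle + \langle s\rangle = \langle d\rangle$ for some $d \in R$. Then we show that $d$ must be a unit, using the hypothesis $\supp(s)\cap\supp(k)=\emptyset$ together with unique factorization, since every principal ideal domain is a unique factorization domain and so $\supp$ is defined.

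First, $d \neq 0$, because $s \in \langle d\rangle$ and $s \neq 0$. Since $s \in \langle d\rangle$ and $k\in\langle d\rangle$, the element $d$ divides both $s$ and $k$. Suppose, for contradiction, that $d$ is not a unit. Then $d$ is a non-zero non-unit, so by Remark~\ref{rem:support_finite} it has a prime factor $p$. By transitivity of divisibility, $p \mid s$ and $p \mid k$, so $[p] \in \supp(s)\cap\supp(k)$. This contradicts the hypothesis.

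Hence $d \in U(R)$, so $\langle k\rangle + \langle s\rangle = \langle d\rangle = R$. Since $s \in R^0$, this gives $s\in\sigma_k^0$ by Definition~\ref{def:macias}.

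The only step needing care is the existence of a prime divisor of a non-unit $d$. This is exactly where we use that a principal ideal domain is a unique factorization domain, so irreducibles are prime. It is also the point where the argument fails for $\Z[x]$ in Remark~\ref{rem:converse_fails}: there the ideal $\langle 2,x\rangle$ is not principal, so no such $d$ exists.
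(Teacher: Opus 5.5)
Your proof is correct and is essentially the paper's argument: the paper takes $d=\gcd(s,k)$ and uses $\langle s\rangle+\langle k\rangle=\langle d\rangle$, whereas you take $d$ directly as a generator of the principal ideal $\langle k\rangle+\langle s\rangle$. In both versions, a prime divisor of a non-unit $d$ would lie in $\supp(s)\cap\supp(k)$, which gives the contradiction.
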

\begin{proof}
Let $d = \gcd(s,k)$, which exists since $R$ is a principal ideal domain. If $d$ is not a unit, then there exists a prime $p$ dividing $d$. Consequently, $p$ divides both $s$ and $k$, so that $[p] \in \supp(s) \cap \supp(k)$, contradicting the hypothesis. Hence $d$ is a unit. Therefore $\langle s \rangle + \langle k \rangle = \langle d \rangle = R$, and thus $s \in \sigma_k^0$.
\end{proof}
Combining Lemmas~\ref{lem:support_forward} and \ref{lem:support_reverse_pid}, we obtain the following equivalence for principal ideal domains.
\begin{corollary}\label{cor:supp_characterization}
Let $R$ be a principal ideal domain. For any $r, s \in R^0$, one has $r \in \sigma_s^0$ if and only if $\supp(r) \cap \supp(s) = \emptyset$. In particular, the basic open sets of $M(R)$ depend exclusively on prime support. 
\end{corollary}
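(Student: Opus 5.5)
The corollary follows by combining the two preceding lemmas, once we note that a principal ideal domain is a unique factorization domain. That fact makes $\supp(\cdot)$ well defined and finitely supported (Remark~\ref{rem:support_finite}), and it lets us apply Lemma~\ref{lem:support_forward} to $R$.

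For the forward direction, suppose $r \in \sigma_s^0$. Applying Lemma~\ref{lem:support_forward} with the pair $(r,s)$ in place of $(s,k)$ gives $\supp(r)\cap\supp(s)=\emptyset$ immediately. For the reverse direction, suppose $\supp(r)\cap\supp(s)=\emptyset$. Lemma~\ref{lem:support_reverse_pid}, applied with $r$ in place of $s$ and $s$ in place of $k$, yields $r\in\sigma_s^0$. No case split is needed, since units are handled uniformly: a unit has empty support, and a unit lies in every $\sigma_k^0$.

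For the ``in particular'' clause, the equivalence just proved gives
\[
\sigma_s^0=\{r\in R^0:\supp(r)\cap\supp(s)=\emptyset\}.
\]
So $\sigma_s^0$ is determined entirely by the finite set $\supp(s)\in\operatorname{Fin}(\Pe_R)$, and membership of $r$ is decided by $\supp(r)$ alone. In particular, if $\supp(s)=\supp(s')$ then $\sigma_s^0=\sigma_{s'}^0$, and if $\supp(r)=\supp(r')$ then $r$ and $r'$ lie in exactly the same basic open sets.

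There is no real obstacle here. The only point needing care is the implicit hypothesis check that a principal ideal domain is a unique factorization domain, which is standard and is what allows Lemma~\ref{lem:support_forward} to be applied.
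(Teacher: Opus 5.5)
Your proposal is correct and follows the paper's own route: the paper obtains the corollary by combining Lemma~\ref{lem:support_forward} (applicable because a principal ideal domain is a unique factorization domain) with Lemma~\ref{lem:support_reverse_pid}. Your description $\sigma_s^0=\{r\in R^0:\supp(r)\cap\supp(s)=\emptyset\}$ for the ``in particular'' clause is the form the paper later uses in the proof of Theorem~\ref{thm:grand_classification}.
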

We conclude this section with a characterization of units that is central to the classification theorem in Section~\ref{sec:classification}.
\begin{lemma}\label{lem:unit_iff}
In the topological space $M(R)$, an element $x \in R^0$ is a unit if and only if the closure of the singleton $\{x\}$ is the entire space, that is, $\overline{\{x\}} = R^0$.
\end{lemma}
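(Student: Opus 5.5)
The plan is to use the standard description of closure in terms of a basis: $y \in \overline{\{x\}}$ if and only if every basic open set $\sigma_k^0$ containing $y$ also contains $x$. The statement is then equivalent to saying that $x$ lies in every non-empty basic open set exactly when $x$ is a unit. Only Definition~\ref{def:macias} is needed; $R$ is an arbitrary integral domain that is not a field, so no unique factorization is assumed.

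For the forward direction, let $u \in U(R)$ and $k \in R^0$. Then $\langle u \rangle = R$, so $\langle k \rangle + \langle u \rangle = R$ and $u \in \sigma_k^0$. Thus $u$ belongs to every basic open set. Now any non-empty open set $V$ contains some $y$, and by definition of the topology generated by $\mathcal{B}$ there is a basic set with $y \in \sigma_k^0 \subseteq V$. This basic set contains $u$, so every non-empty open set meets $\{u\}$ and $\overline{\{u\}} = R^0$.

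For the converse, suppose $x \in R^0$ is not a unit. I will exhibit a non-empty open set missing $x$, namely $\sigma_x^0$.
\begin{itemize}
\item \emph{$\sigma_x^0$ misses $x$:} $x \in \sigma_x^0$ would require $\langle x \rangle + \langle x \rangle = \langle x \rangle = R$, which is impossible since $x$ is not a unit.
\item \emph{$\sigma_x^0$ is non-empty:} it contains $1$, because $\langle x \rangle + \langle 1 \rangle = R$.
\end{itemize}
Hence $1 \notin \overline{\{x\}}$, so $\overline{\{x\}} \neq R^0$.

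The argument is short, and the only point needing care is this non-emptiness check for $\sigma_x^0$. It is immediate once one notes that units lie in every basic set, which is exactly the fact used in the forward direction.
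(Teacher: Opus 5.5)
Your proof is correct and follows essentially the same route as the paper: for the forward direction, units lie in every $\sigma_k^0$; for the converse, $\sigma_x^0$ is the witnessing basic open set, since $x \in \sigma_x^0$ would force $\langle x \rangle = R$. The paper phrases the converse directly rather than by contrapositive, and your check that $1 \in \sigma_x^0$ is exactly the detail that justifies the paper's unstated step ``$x$ belongs to every basic open set.''
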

\begin{proof}
If $x$ is a unit, then $\langle x \rangle + \langle k \rangle = R$ for every $k \in R^0$, so $x \in \sigma_k^0$ for all $k \in R^0$. It follows that every basic open set containing any point $y\in R^0$ also contains $x$. Therefore $\overline{\{x\}} = R^0$.

Conversely, suppose $\overline{\{x\}} = R^0$. Then $x$ belongs to every basic open set. In particular, $x \in \sigma_x^0$. This implies that $\langle x \rangle + \langle x \rangle = R$. Hence $\langle x \rangle = R$, forcing $x \in U(R)$. This completes the proof.
\end{proof}
\begin{remark}\label{rem:macias_ortiz}
The forward implication of Lemma~\ref{lem:unit_iff} was obtained independently by Mac\'ias and Ortiz in \cite[Theorem~5.5]{MaciasOrtiz} via the closure formula for the Jacobson radical of an ideal.
\end{remark}
\section{Semiprimitivity and Prime Density}\label{sec:density}
In \cite[Subsection~3.16]{MaciasDomains}, the assumption that $U(R)$ is finite was used to ensure that it is not open in $M(R)$. In Theorem~\ref{thm:jacobson} we characterize this topological condition with an algebraic one with no cardinality condition.

Recall that the \emph{Jacobson radical} $\mathfrak{J}(R)$ of a commutative ring $R$ is the intersection of all its maximal ideals, and that $R$ is called \emph{semiprimitive} if $\mathfrak{J}(R) = \{0\}$.

In this section we prove the following Theorem.
\begin{theorem}\label{thm:density_equivalence}
Let $R$ be a principal ideal domain that is not a field. The following are equivalent:
\begin{enumerate}[label=\textup{(\alph*)}]
    \item\label{it:not_open} $U(R)$ is not open in $M(R)$.
    \item\label{it:dense} The set of prime elements $\Pe_R$ is dense in $M(R)$.
    \item\label{it:infinite} The set of prime elements $\Pe_R$ is infinite.
    \item\label{it:semiprimitive} $R$ is semiprimitive.
\end{enumerate}
\end{theorem}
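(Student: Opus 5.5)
The plan is to prove a cycle of implications, organized around the prime support calculus of Corollary~\ref{cor:supp_characterization}. By that corollary, in a PID a basic open set $\sigma_k^0$ consists exactly of the $s\in R^0$ whose support avoids the finite set $\supp(k)$. Conversely, every finite $F\in\operatorname{Fin}(\Pe_R)$ arises as $\supp(k)$: take $k$ to be the product of one representative prime from each class in $F$, with $k=1$ when $F=\emptyset$. Throughout, ``$\Pe_R$ dense'' is read as ``the set of prime elements is dense''.

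\textbf{Density and infinitude.} For \ref{it:infinite}$\Rightarrow$\ref{it:dense}, it suffices to meet every nonempty basic open set $\sigma_k^0$. Since $\supp(k)$ is finite and $\Pe_R$ is infinite, some prime $p$ has $[p]\notin\supp(k)$. Then $\supp(p)\cap\supp(k)=\emptyset$, so $p\in\sigma_k^0$ by Lemma~\ref{lem:support_reverse_pid}. For \ref{it:dense}$\Rightarrow$\ref{it:infinite}, argue by contraposition. If there are only finitely many classes, represented by $p_1,\dots,p_n$ (with $n\ge1$ since $R$ is not a field), let $k=p_1\cdots p_n$. Then $\sigma_k^0$ is a nonempty open set: it contains $1$. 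It contains no prime, since every prime shares its support with $k$ (Lemma~\ref{lem:support_forward}). Hence the primes are not dense.

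\textbf{Units and infinitude.} For \ref{it:infinite}$\Rightarrow$\ref{it:not_open}, suppose $U(R)$ were open. It contains $1$, so some basic $\sigma_k^0$ satisfies $1\in\sigma_k^0\subseteq U(R)$. The prime $p$ chosen above with $[p]\notin\supp(k)$ lies in $\sigma_k^0$ but is not a unit, a contradiction. For the converse, if $\Pe_R$ is finite then the same $k=p_1\cdots p_n$ gives $\sigma_k^0=\{s:\supp(s)=\emptyset\}=U(R)$, so $U(R)$ is open.

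\textbf{Semiprimitivity.} For \ref{it:infinite}$\Leftrightarrow$\ref{it:semiprimitive}, use that the maximal ideals of a PID that is not a field are exactly the $\langle p\rangle$ for prime $p$. Hence $\mathfrak J(R)=\bigcap_{[p]\in\Pe_R}\langle p\rangle$, which is the set of elements divisible by every prime. If $\Pe_R$ is finite, the nonzero element $p_1\cdots p_n$ lies in $\mathfrak J(R)$. If $\Pe_R$ is infinite, any nonzero $a\in\mathfrak J(R)$ would satisfy $\supp(a)=\Pe_R$, contradicting Remark~\ref{rem:support_finite}. Alternatively, one could prove \ref{it:semiprimitive}$\Rightarrow$\ref{it:not_open} directly for general domains: if $1\in\sigma_k^0\subseteq U(R)$, then for every $j\in\mathfrak J(R)$ the element $1+jk$ is comaximal with $k$. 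It is therefore a unit, which forces $k\in\mathfrak J(R)$ and hence $\mathfrak J(R)\ne0$. For the PID statement, however, the support route suffices.

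\textbf{Expected obstacle.} No step is deep. The main care needed is the identification $\sigma_k^0=U(R)$ when $\Pe_R$ is finite, together with the correct use of finiteness of supports (Remark~\ref{rem:support_finite}) in each direction, including the edge case $k$ a unit.
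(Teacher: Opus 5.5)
Your proof is correct. For (a), (b), (c) it uses the same ingredients as the paper: the finite-support argument for (c)$\Rightarrow$(b), and the element $p_1\cdots p_n$ from Lemma~\ref{lem:finite_primes}. You organize them as two biconditionals through (c), whereas the paper runs the cycle (b)$\Rightarrow$(a)$\Rightarrow$(c)$\Rightarrow$(b).

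The genuine difference is in (d). The paper obtains (d)$\Leftrightarrow$(a) from Theorem~\ref{thm:jacobson}, a topological argument valid for every integral domain that is not a field. You instead link (d) to (c) purely algebraically, via $\mathfrak{J}(R)=\bigcap_{[p]\in\Pe_R}\langle p\rangle$ and the finiteness of supports. Your route is more elementary and shows that, for PIDs, semiprimitivity is just the classical fact that there are infinitely many non-associate primes. The paper's route buys the general-domain equivalence recorded in Remark~\ref{rem:ufd_extensions}(i); your argument cannot give this, since it needs maximal ideals to be principal and generated by primes.

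There is one slip in your optional alternative. To conclude $k\in\mathfrak{J}(R)$, you must let $j$ range over all of $R$, not over $\mathfrak{J}(R)$. For $j\in\mathfrak{J}(R)$, the element $1+jk$ is automatically a unit, so nothing is learned. With $j\in R$ the argument goes through, and it is essentially the paper's proof of Theorem~\ref{thm:jacobson} with $s=1-xk$. You should also note that $1+jk\neq 0$: otherwise $k$ would be a unit, giving $\sigma_k^0=R^0\not\subseteq U(R)$.
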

\begin{remark}\label{rem:scope}
The class of semiprimitive principal ideal domains is extensive. Notable examples include the ring of integers~$\Z$, 
polynomial rings~$F[x]$ over an arbitrary field~$F$, and rings of integers~$\mathcal{O}_K$ of 
number fields~$K$ with class number one---such as the Gaussian integers~$\Z[i]$, the Eisenstein 
integers~$\Z[\omega]$, and the real quadratic ring~$\Z[\sqrt{2}]$. 
More generally, Theorem~\ref{thm:density_equivalence} implies that any principal ideal domain with infinitely many non-associate prime elements is semiprimitive.
In particular, the rings of $S$-integers $\mathcal{O}_{K,S}$ with trivial $S$-class group 
and the localizations $\Z[S^{-1}]$ for any finite set of rational primes~$S$ furnish further 
examples.
\end{remark}
Before proving the Theorem~\ref{thm:density_equivalence} we have the following Theorem~\ref{thm:jacobson} and Lemma~\ref{lem:finite_primes} which we use for Proving Theorem~\ref{thm:density_equivalence}.
\begin{theorem}\label{thm:jacobson}
Let $R$ be an integral domain that is not a field. The ring $R$ is semiprimitive if and only if the group of units $U(R)$ is not an open set in $M(R)$.
\end{theorem}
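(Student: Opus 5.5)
The plan is to show that $U(R)$ is open in $M(R)$ if and only if $\mathfrak{J}(R) \neq \{0\}$. Both directions rest on the standard characterization of the Jacobson radical in a commutative ring:
\[
j \in \mathfrak{J}(R) \iff 1 + xj \in U(R) \text{ for all } x \in R.
\]
We use it in the equivalent form: $j \in \mathfrak{J}(R)$ if and only if every $s$ with $\langle s\rangle + \langle j\rangle = R$ is a unit. Indeed, if $j \in \mathfrak{J}(R)$ and $as + bj = 1$, then $as = 1 - bj$ is a unit, so $s$ is a unit. Conversely, if $j \notin \mathfrak{J}(R)$, pick $x$ with $1 + xj$ a non-unit; then $s = 1 + xj$ is comaximal with $j$, and $s \neq 0$ because $s$ is comaximal with $j$ and $R$ is not the zero ring.

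First suppose $R$ is not semiprimitive, and take $j \in \mathfrak{J}(R)$ with $j \neq 0$. Then $\sigma_j^0$ is a basic open set, and the characterization above gives $\sigma_j^0 \subseteq U(R)$. The reverse inclusion $U(R) \subseteq \sigma_k^0$ holds for every $k \in R^0$, so $U(R) = \sigma_j^0$, and $U(R)$ is open.

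Conversely, suppose $U(R)$ is open. It is nonempty since $1 \in U(R)$, so some basic open set satisfies $1 \in \sigma_k^0 \subseteq U(R)$ with $k \in R^0$. We claim $k \in \mathfrak{J}(R)$, which makes $R$ not semiprimitive. For any $x \in R$, the element $s = 1 + xk$ satisfies $s - xk = 1$, so $s$ is comaximal with $k$. If $s \neq 0$, then $s \in \sigma_k^0 \subseteq U(R)$. If $s = 0$, then $xk = -1$, so $k$ is a unit, and a unit lies in every maximal ideal only vacuously; since $R$ is not a field it has a maximal ideal, which cannot contain a unit. So this case needs care. In fact, if $k$ is a unit then $\sigma_k^0 = R^0 \subseteq U(R)$, which would make $R$ a field, contradicting the hypothesis. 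Hence $k$ is a non-unit, $s \neq 0$ for every $x$, every $1 + xk$ is a unit, and therefore $k \in \mathfrak{J}(R) \setminus \{0\}$.

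The main obstacle is only bookkeeping: making sure the elements $1 + xk$ are nonzero, so that they actually lie in $R^0$, and ruling out the degenerate case in which $k$ is a unit. Both points are handled by the hypothesis that $R$ is not a field. Everything else is the standard unit criterion for the Jacobson radical, together with the fact that units lie in every $\sigma_k^0$, as in Lemma~\ref{lem:unit_iff}.
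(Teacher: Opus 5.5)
Your proof is correct and follows the paper's argument in both directions, including using the non-field hypothesis to force $k$ to be a non-unit so that the elements comaximal with it are nonzero and hence lie in $\sigma_k^0$. The only difference is that you invoke the standard criterion $j\in\mathfrak{J}(R)\iff 1+xj\in U(R)$ for all $x$, whereas the paper argues directly with maximal ideals and Krull's theorem. One small slip, in an aside your main argument never uses: the claim that $s=1+xj\neq 0$ ``because $s$ is comaximal with $j$'' fails when $j$ is a unit (take $x=-j^{-1}$), and that is exactly the case your second direction correctly handles separately.
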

\begin{proof}
First, assume $R$ is semiprimitive. Suppose, for the sake of contradiction, that $U(R)$ is open in $M(R)$. Since $1 \in U(R)$, there exists some $k \in R^0$ such that $\sigma_k^0 \subseteq U(R)$. Observe that $k$ must be a non-unit: if $k$ were a unit, then $\sigma_k^0 = R^0$ by \cite[Theorem~2.2]{MaciasDomains}, forcing $U(R) = R^0$, which contradicts the assumption that $R$ is not a field. Thus, $k$ is a non-unit. Let $\m$ be an arbitrary maximal ideal of $R$. If $k \notin \m$, the maximality of $\m$ yields $\langle k \rangle + \m = R$, implying there exist elements $x \in R$ and $s \in \m$ such that $xk + s = 1$. Consequently, $\langle k \rangle + \langle s \rangle = R$. Since $k$ is not a unit, $s \neq 0$, meaning $s \in \sigma_k^0$. By our assumption, $s \in U(R)$. However, a proper ideal cannot contain a unit, resulting in a contradiction. Therefore, $k \in \m$ for every maximal ideal $\m$, yielding $k \in \mathfrak{J}(R)$. Since $R$ is semiprimitive, $\mathfrak{J}(R) = \{0\}$, which forces $k = 0$, contradicting $k \in R^0$. Thus, $U(R)$ is not open.

Conversely, assume $R$ is not semiprimitive. We will show that $U(R)$ is open in $M(R)$. Since $\mathfrak{J}(R) \neq \{0\}$, there exists a non-zero element $k \in \mathfrak{J}(R)$. Consider the basic open set $\sigma_k^0$. Let $s$ be an arbitrary element in $\sigma_k^0$, which by definition means $\langle k \rangle + \langle s \rangle = R$. If $s$ were a non-unit, then by Krull's Theorem $s$ is contained in some maximal ideal $\m$ of $R$. Furthermore, as $k \in \mathfrak{J}(R)$, the element $k$ is contained in all maximal ideals of $R$, including $\m$. This implies that $\langle k \rangle + \langle s \rangle \subseteq \m \subsetneq R$, which directly contradicts $\langle k \rangle + \langle s \rangle = R$. Hence, the element $s$ must be a unit. As this holds for every $s \in \sigma_k^0$, we deduce that $\sigma_k^0 \subseteq U(R)$. Thus we have  $u\in \sigma_k^0\subseteq U(R)$ for all $u\in U(R)$. Thus $U(R)$ is an open set. This establishes the equivalence.
\end{proof}
In \cite[Theorem~3.18, Corollary~3.21]{MaciasDomains}, it is shown that if a principal ideal domain $R$ possesses a finite (and hence non-open) unit group $U(R)$, then the set $\Pe_{R}$ must be infinite. However, Lemma \ref{lem:finite_primes} demonstrates that this implication holds more generally for any unique factorization domain.
\begin{lemma}\label{lem:finite_primes}
Let $R$ be a unique factorization domain that is not a field. If $\Pe_R$ is finite, then $U(R)$ is open in $M(R)$ and $\Pe_R$ is not dense in $M(R)$.
\end{lemma}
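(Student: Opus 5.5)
The plan is to build one explicit element $k \in R^0$ whose basic open set $\sigma_k^0$ does both jobs. It should lie inside $U(R)$, which gives openness. It should also be disjoint from every prime element, which gives non-density.

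Since $\Pe_R$ is finite, pick representatives $p_1,\dots,p_n$ of all associate classes of primes and set $k = p_1 p_2 \cdots p_n$. We have $n \ge 1$: $R$ is not a field, so it has a nonzero non-unit, which in a unique factorization domain has at least one prime factor. Then $k \in R^0$ is a non-unit and $\supp(k) = \Pe_R$.

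\textbf{Openness of $U(R)$.} Take any $s \in \sigma_k^0$. By Lemma~\ref{lem:support_forward}, $\supp(s) \cap \supp(k) = \emptyset$, so $\supp(s) = \emptyset$. By Remark~\ref{rem:support_finite}, every nonzero non-unit of a UFD has a prime factor, so $s$ is a unit. Thus $\sigma_k^0 \subseteq U(R)$. Conversely, every unit $u$ satisfies $\langle u\rangle + \langle k\rangle = R$, so $\sigma_k^0 = U(R)$ exactly, and $U(R)$ is a basic open set.

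\textbf{Non-density of $\Pe_R$.} Here I read $\Pe_R$ as the set of prime elements of $R$. The set $\sigma_k^0 = U(R)$ is nonempty, since it contains $1$, and it is open. It contains no prime element, because primes are non-units. So $\sigma_k^0$ is a nonempty open set missing $\Pe_R$, and $\Pe_R$ is not dense.

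An alternative for the first part is the route through Theorem~\ref{thm:jacobson}: show $0 \neq k \in \mathfrak{J}(R)$. Every maximal ideal $\m$ contains a nonzero non-unit, hence a prime factor $p_i$ of it, so $k \in \m$. The direct support argument is cleaner and avoids that detour.

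The only delicate point is that "not a field" is what guarantees $n \ge 1$. If there were no primes, $k$ would be an empty product, i.e. a unit, and $\sigma_k^0 = R^0$ would be the whole space. Since $n \ge 1$, $k$ is a genuine non-unit, and the argument has no real obstacle.
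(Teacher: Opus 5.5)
Your proof is correct and essentially the paper's argument: take $k = p_1\cdots p_n$, use Lemma~\ref{lem:support_forward} to get $\sigma_k^0 \subseteq U(R)$ (an open set containing $1$), and observe that this nonempty open set contains no prime element. Your explicit check that $n \ge 1$ because $R$ is not a field, and your observation that in fact $\sigma_k^0 = U(R)$, are small refinements the paper leaves implicit.
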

\begin{proof}
Let $\{p_1, \dots, p_n\}$ be a complete set of representatives for $\Pe_R$ and set $\alpha = p_1 \cdots p_n$. If $x \in \sigma_\alpha^0$, then by Lemma~\ref{lem:support_forward}, $\supp(x) \cap \supp(\alpha) = \emptyset$. Since $\supp(\alpha) = \Pe_R$, the element $x$ admits no prime divisors and therefore is a unit. Hence $\sigma_\alpha^0 \subseteq U(R)$, and since $1 \in \sigma_\alpha^0$, the set $U(R)$ is open.

Moreover, every prime is a non-unit, so $\sigma_\alpha^0 \cap \Pe_R = \emptyset$. Hence $\Pe_R$ is not dense.
\end{proof}
\begin{remark}
The converse of Lemma~\ref{lem:finite_primes} does not hold for unique factorization domains in general as explained in Remark~\ref{rem:ufd_extensions}. 
\end{remark}
\begin{proof}[Proof of Theorem~\ref{thm:density_equivalence}]
\ref{it:dense}${}\Rightarrow{}$\ref{it:not_open}.\; If $\Pe_R$ is dense, then every basic open set $\sigma_k^0$ contains a prime element. Since primes are non-units, no basic open set is contained in $U(R)$, so $U(R)$ is not open.

\ref{it:not_open}${}\Rightarrow{}$\ref{it:infinite}.\; This is the contrapositive of Lemma~\ref{lem:finite_primes}: if $\Pe_R$ were finite, then $U(R)$ would be open.

\ref{it:infinite}${}\Rightarrow{}$\ref{it:dense}.\; Let $\sigma_k^0$ be an arbitrary non-empty basic open set. Since $R$ is a principal ideal domain, $k$ has only finitely many prime divisors. Because $\Pe_R$ is infinite, there exists a prime $p$ with $[p] \notin \supp(k)$. By Corollary~\ref{cor:supp_characterization}, $p \in \sigma_k^0$, and hence $\sigma_k^0 \cap \Pe_R \neq \emptyset$.
\ref{it:semiprimitive}${}\Leftrightarrow{}$\ref{it:not_open} Follows from Theorem~\ref{thm:jacobson}.
\end{proof}
\begin{remark}\label{rem:ufd_extensions}
A natural question arises as to which implications of Theorem~\ref{thm:density_equivalence} extend beyond principal ideal domains. We investigate this for unique factorization domains.
 \begin{enumerate}[label=\textup{(\roman*)}]
 \item The equivalence \ref{it:semiprimitive} $\Leftrightarrow$ \ref{it:not_open} holds for any integral domain. This follows from Theorem~\ref{thm:jacobson}.
\item The implication \ref{it:not_open} $\Rightarrow$ \ref{it:infinite} holds for any unique factorization domain. This follows from Lemma~\ref{lem:finite_primes}.
\item The implication \ref{it:dense}${}\Rightarrow{}$\ref{it:not_open} holds for any integral domain. The same argument as in the proof of \ref{it:dense} $\Rightarrow$ \ref{it:not_open} in Theorem~\ref{thm:density_equivalence} applies.
 \item For unique factorization domains, the implications \ref{it:infinite} $\Rightarrow$ \ref{it:dense}, \ref{it:infinite} $\Rightarrow$ \ref{it:not_open}, and \ref{it:infinite} $\Rightarrow$ \ref{it:semiprimitive} fail in general.

For this consider the formal power series ring $R = \mathbb{C}[[x,y]]$. This ring is an unique factorization domain (being a regular local ring), and it possesses infinitely many pairwise non-associate primes, including $x$, $y$, and $x - cy^n$ for each $c \in \mathbb{C}^{\times}$ and $n \geq 1$. Thus \ref{it:infinite} is satisfied.
 
However, $R$ is a local ring with the unique maximal ideal $\m = \langle x, y \rangle$, so that $\mathfrak{J}(R) = \m \neq \{0\}$. Therefore the statement \ref{it:semiprimitive} fails.
 
To verify the failure of~\ref{it:not_open} and~\ref{it:dense}, observe that $R / \langle x \rangle \cong \mathbb{C}[[y]]$, which is a local domain whose units are precisely the power series with non-zero constant term. Consequently, $s \in \sigma_x^0$ if and only if the image of $s$ in $\mathbb{C}[[y]]$ is a unit, which occurs precisely when $s$ has a non-zero constant term i.e., when $s \in U(R)$. Hence $\sigma_x^0 = U(R)$, and $U(R)$ is open, so \ref{it:not_open} fails. Since every prime element of $R$ lies in $\m$ and therefore has zero constant term, we obtain $\sigma_x^0 \cap \Pe_R = \emptyset$, so \ref{it:dense} fails as well.
\end{enumerate}
\end{remark}

\section{Classification of the Mac\'ias Space over Principal Ideal Domains}\label{sec:classification} 

In this section, we establish necessary and sufficient algebraic conditions for the Mac\'ias spaces of two principal ideal domains to be homeomorphic. In \cite[Problem~4.1]{MaciasDomains}, Mac\'ias poses the following open problem.
\begin{problem}[\cite{MaciasDomains}]
Let $R$ and $S$ be countably infinite semiprimitive integral domains. Decide whether $M(R)$ and $M(S)$ are homeomorphic.
\end{problem}
We resolve this problem by providing a complete classification of the Mac\'ias space of any infinite principal ideal domain, up to homeomorphism in Corollary~\ref{cor:countable_semiprimitive}. Corollary~\ref{cor:negative} answers the Problem~4.1 raised by Mac\'ias in \cite{MaciasDomains}. In fact, More generally, we prove the following Classification Theorem.
\begin{theorem}\label{thm:grand_classification}
Let $R$ and $S$ be infinite principal ideal domains that are not fields. Then the topological spaces $M(R)$ and $M(S)$ are homeomorphic if and only if $|U(R)| = |U(S)|$ and $|\Pe_R| = |\Pe_S|$.
\end{theorem}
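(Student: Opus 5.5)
The plan is to reduce $M(R)$ to a combinatorial space on $\operatorname{Fin}(\Pe_R)$ via the prime support map, and to read both cardinal invariants off the topology. By Corollary~\ref{cor:supp_characterization}, a point $r\in R^0$ lies in $\sigma_k^0$ if and only if $\supp(r)\cap\supp(k)=\emptyset$. Hence every open set of $M(R)$ is a union of fibres of the map $\supp\colon R^0\to\operatorname{Fin}(\Pe_R)$.

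\textbf{Step 1 (topological meaning of the fibres).} I will first show that two points are topologically indistinguishable exactly when they have the same support.
\begin{itemize}
\item If $\supp(r)=\supp(r')$, then $r$ and $r'$ lie in the same basic open sets, by the corollary above.
\item If $\supp(r)\neq\supp(r')$, say $[p]\in\supp(r)\setminus\supp(r')$, then $\sigma_p^0$ contains $r'$ but not $r$.
\end{itemize}
So the Kolmogorov quotient of $M(R)$ is $\operatorname{Fin}(\Pe_R)$, whose basic open sets are $\{A: A\cap B=\emptyset\}$ for $B$ finite. This uses that every finite $B$ is realised as $\supp(k)$, by taking $k$ to be a product of representatives.

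\textbf{Step 2 (fibre cardinalities).} The fibre over $\emptyset$ is $U(R)$. The fibre over a nonempty $A=\{[p_1],\dots,[p_n]\}$ consists of the elements $u\,p_1^{e_1}\cdots p_n^{e_n}$ with $u\in U(R)$ and $e_i\ge 1$. By unique factorisation this fibre has cardinality $|U(R)|\cdot\aleph_0=\max(|U(R)|,\aleph_0)$.

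\textbf{Step 3 (sufficiency).} Assume $|U(R)|=|U(S)|$ and $|\Pe_R|=|\Pe_S|$.
\begin{itemize}
\item Fix a bijection $\varphi\colon\Pe_R\to\Pe_S$. It induces a bijection $\operatorname{Fin}(\Pe_R)\to\operatorname{Fin}(\Pe_S)$ that preserves disjointness.
\item By Step 2, corresponding fibres have equal cardinality, so choose bijections fibre by fibre. Use a bijection $U(R)\to U(S)$ on the fibre over $\emptyset$. Assembling these gives a bijection $f\colon R^0\to S^0$ with $\supp(f(r))=\varphi(\supp(r))$.
\item For $k'\in S^0$, let $k\in R^0$ be a product of representatives of $\varphi^{-1}(\supp(k'))$. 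Then $f^{-1}(\sigma_{k'}^0)=\sigma_k^0$, and symmetrically for $f^{-1}$. Hence $f$ is a homeomorphism.
\end{itemize}

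\textbf{Step 4 (necessity).} Let $h\colon M(R)\to M(S)$ be a homeomorphism.
\begin{itemize}
\item A homeomorphism preserves closures of singletons. By Lemma~\ref{lem:unit_iff}, $h$ maps $U(R)$ bijectively onto $U(S)$, so $|U(R)|=|U(S)|$.
\item A homeomorphism also preserves topological indistinguishability. By Step 1, $h$ therefore induces a bijection $\operatorname{Fin}(\Pe_R)\to\operatorname{Fin}(\Pe_S)$, so $|\operatorname{Fin}(\Pe_R)|=|\operatorname{Fin}(\Pe_S)|$.
\item A PID that is not a field has at least one prime. If $\Pe_R$ is infinite then $|\operatorname{Fin}(\Pe_R)|=|\Pe_R|$, while if $|\Pe_R|=n$ is finite then $|\operatorname{Fin}(\Pe_R)|=2^n$. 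Hence $\Pe_R$ and $\Pe_S$ are both finite or both infinite, and in either case $|\Pe_R|=|\Pe_S|$.
\end{itemize}
As an alternative that avoids this cardinal arithmetic, $h$ preserves the specialisation preorder, which on the quotient is inclusion of finite sets. The minimal non-bottom classes are then the singletons, so $h$ induces a bijection between the prime classes themselves.

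\textbf{Main obstacle.} The crux is Step 1, namely identifying the indistinguishability classes exactly with the support fibres, since both directions of the classification rest on it. Once that is established, the remainder is bookkeeping of cardinalities.
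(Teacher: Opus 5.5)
Your proof is correct. The sufficiency direction and the recovery of $|U(R)|$ via Lemma~\ref{lem:unit_iff} are exactly what the paper does. The sufficiency argument consists of fibrewise bijections over a bijection $\varphi\colon\Pe_R\to\Pe_S$, together with the count $|F_A|=|U(R)|\cdot\aleph_0$ for $A\neq\emptyset$.

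You differ from the paper in how you recover $|\Pe_R|$. The paper imports the singleton-closure formula of Lemma~\ref{lem:closure_pid} from Mac\'ias. It then shows that the maximal proper singleton closures are precisely the sets $\langle p\rangle^0$, so a homeomorphism induces a bijection $\mathcal{C}_R\to\mathcal{C}_S$ and hence $\Pe_R\to\Pe_S$. You instead derive directly from Corollary~\ref{cor:supp_characterization} that the indistinguishability classes are exactly the support fibres. You then compare $|\operatorname{Fin}(\Pe_R)|$ with $|\operatorname{Fin}(\Pe_S)|$, using $|\operatorname{Fin}(X)|=|X|$ for infinite $X$ and the injectivity of $n\mapsto 2^n$.

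Your route is self-contained, needing no external closure lemma. It also shows that a single structure drives both directions: the $T_0$ quotient $\operatorname{Fin}(\Pe_R)$ together with the fibre cardinalities.

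Your ``alternative'' via the specialisation preorder is essentially the paper's argument. The support characterization gives $\overline{\{x\}}=\{y : \supp(x)\subseteq\supp(y)\}$, so the maximal proper singleton closures are the $\langle p\rangle^0$, which correspond to singleton supports. That route produces a canonical bijection $\Pe_R\to\Pe_S$ induced by the homeomorphism, without any cardinal arithmetic.
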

\begin{corollary}\label{cor:countable_semiprimitive}
Let $R$ and $S$ be countably infinite semiprimitive principal ideal domains that are not fields. Then $M(R)$ and $M(S)$ are homeomorphic if and only if $|U(R)| = |U(S)|$.
\end{corollary}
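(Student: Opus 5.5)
The plan is to deduce Corollary~\ref{cor:countable_semiprimitive} directly from Theorem~\ref{thm:grand_classification}. We show that under the present hypotheses the condition $|\Pe_R| = |\Pe_S|$ always holds, so only the condition on the unit groups remains.

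First, I would show that $|\Pe_R| = \aleph_0$ for any countably infinite semiprimitive principal ideal domain $R$ that is not a field.
\begin{itemize}
\item \emph{Lower bound.} Since $R$ is semiprimitive, the implication \ref{it:semiprimitive}${}\Rightarrow{}$\ref{it:infinite} of Theorem~\ref{thm:density_equivalence} shows that $\Pe_R$ is infinite.
\item \emph{Upper bound.} Choose one prime representative from each associate class. This gives an injection $\Pe_R \hookrightarrow R$, since distinct classes are disjoint subsets of $R$. Hence $|\Pe_R| \le |R| = \aleph_0$.
\end{itemize}
Together these give $|\Pe_R| = \aleph_0$. The same argument gives $|\Pe_S| = \aleph_0$, and so $|\Pe_R| = |\Pe_S|$.

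Next, both $R$ and $S$ are infinite principal ideal domains that are not fields, so Theorem~\ref{thm:grand_classification} applies. It says $M(R) \cong M(S)$ if and only if $|U(R)| = |U(S)|$ and $|\Pe_R| = |\Pe_S|$. The second condition is automatic by the previous step, so the criterion reduces to $|U(R)| = |U(S)|$, which is the claim.

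There is no real obstacle here. The only point needing care is the countability of $\Pe_R$, which uses the choice of representatives and the disjointness of associate classes. It may also be worth noting that $|U(R)|$ can be any finite cardinal or $\aleph_0$, so the corollary genuinely distinguishes Mac\'ias spaces. For example, $U(\Z) = \{\pm 1\}$ while $U(\F_2[x]) = \{1\}$, so $M(\Z)$ and $M(\F_2[x])$ are not homeomorphic. This is presumably what Corollary~\ref{cor:negative} records as the negative answer to Problem~4.1.
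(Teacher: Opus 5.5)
Your proof is correct and is essentially the paper's proof: Theorem~\ref{thm:density_equivalence} makes $\Pe_R$ and $\Pe_S$ infinite, countability of the rings gives $|\Pe_R| = |\Pe_S| = \aleph_0$ (the paper leaves your choice-of-representatives injection implicit), and Theorem~\ref{thm:grand_classification} finishes. One correction to your closing aside, which the proof does not use: $|U(R)|$ cannot be an arbitrary finite cardinal, because a finite unit group of a domain is cyclic and must have order $q-1$ for some prime power $q$ (in characteristic $p$, a generator $\zeta$ makes $\F_p[\zeta]$ a finite field all of whose nonzero elements are units; in characteristic $0$ the order is even, and Dirichlet's unit theorem for $\Z[\zeta]$ leaves only $2$, $4$, $6$), so for instance no domain has exactly $5$ units.
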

\begin{proof}
By Theorem~\ref{thm:density_equivalence}, both $\Pe_R$ and $\Pe_S$ are infinite. Since $R$ and $S$ are countably infinite, both prime sets are countably infinite, so $|\Pe_R| = |\Pe_S| = \aleph_0$. The result now follows from Theorem~\ref{thm:grand_classification}.
\end{proof}
\begin{corollary}\label{cor:negative}
There exist countably infinite semiprimitive integral domains $R$ and $S$ such that $M(R)$ and $M(S)$ are not homeomorphic.
\end{corollary}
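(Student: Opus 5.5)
Corollary~\ref{cor:countable_semiprimitive} reduces the question to exhibiting two countably infinite semiprimitive principal ideal domains that are not fields and whose unit groups have different cardinalities. Such a pair is then automatically a pair of semiprimitive integral domains with non-homeomorphic Mac\'ias spaces. My plan is to take $R = \Z$ and $S = \F[x]$ for a suitable countably infinite field $\F$. The natural choice is $\F = \mathbb{Q}$, or alternatively the algebraic closure $\overline{\F_p}$ of a finite field.

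First I check the hypotheses for $R = \Z$. It is a countably infinite principal ideal domain and not a field. It is semiprimitive by Remark~\ref{rem:scope}, or directly because $\Pe_\Z$ is infinite, which gives semiprimitivity through Theorem~\ref{thm:density_equivalence}. Its unit group is $U(\Z) = \{\pm 1\}$, so $|U(\Z)| = 2$.

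Next I check $S = \mathbb{Q}[x]$. It is a principal ideal domain because $\mathbb{Q}$ is a field, and it is not a field since $x$ is not invertible. It is countably infinite, being a countable union of the countable sets of polynomials of degree at most $n$. It is semiprimitive, since $\{x - a : a \in \mathbb{Q}\}$ is an infinite family of pairwise non-associate primes and Theorem~\ref{thm:density_equivalence} then applies. Its unit group is $U(\mathbb{Q}[x]) = \mathbb{Q}^{\times}$, because units of a polynomial ring over a domain are the nonzero constants by the degree argument. Hence $|U(S)| = \aleph_0$.

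Since $2 \neq \aleph_0$, Corollary~\ref{cor:countable_semiprimitive} shows that $M(\Z)$ and $M(\mathbb{Q}[x])$ are not homeomorphic. This gives the desired example.

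There is no serious obstacle here, since all the weight sits in Theorem~\ref{thm:grand_classification}, which the corollary takes as given. If one wanted to avoid the full classification, there is a direct argument. By Lemma~\ref{lem:unit_iff}, the units are exactly the points whose closure is the whole space. This property is preserved by any homeomorphism, so a homeomorphism $M(R) \to M(S)$ would restrict to a bijection $U(R) \to U(S)$. That is impossible when one set has two elements and the other is infinite. This self-contained route is the one I would actually write, citing Lemma~\ref{lem:unit_iff} as the key invariant.
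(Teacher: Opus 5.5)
Your proposal is correct and follows the paper's approach: the paper also separates the spaces by the cardinality of their unit groups via Corollary~\ref{cor:countable_semiprimitive}. The paper uses $\Z$ and $\F_2[x]$ (so $|U| = 2$ versus $1$), whereas you use $\Z$ and $\mathbb{Q}[x]$ (so $2$ versus $\aleph_0$), and both examples work equally well; your self-contained variant via Lemma~\ref{lem:unit_iff} is exactly the forward half of the proof of Theorem~\ref{thm:grand_classification}, which is the only part this corollary needs.
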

\begin{proof}
Take $R = \Z$ and $S = \F_2[x]$. Both are countably infinite semiprimitive principal ideal domains, but $|U(\Z)| = 2$ while $|U(\F_2[x])| = 1$. By Corollary~\ref{cor:countable_semiprimitive}, $M(\Z)$ and $M(\F_2[x])$ are not homeomorphic.
\end{proof}
In order to prove Theorem~\ref{thm:grand_classification}, we require the closure structure of singletons in a principal ideal domain. We have the following lemma from \cite[Corollaries~3.2,\,3.5]{MaciasDomains} to serve this purpose.
\begin{lemma}\label{lem:closure_pid}
Let $R$ be a principal ideal domain and let $p \in R$ be a prime element. Then $\overline{\{p\}} = \langle p \rangle^0$. More generally, if $x \in R^0 \setminus U(R)$ admits the prime factorization $x = u \prod_{i=1}^{k} p_i^{a_i}$ with $u \in U(R)$, $a_i \geq 1$, and the $p_i$ pairwise non-associate primes, then
\[
\overline{\{x\}} = \bigcap_{i=1}^{k} \langle p_i \rangle^0.
\]
\end{lemma}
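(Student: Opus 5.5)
We prove Lemma~\ref{lem:closure_pid} by computing the closure pointwise from Corollary~\ref{cor:supp_characterization}: we decide, for each $y\in R^0$, when every basic open set containing $y$ also contains $x$. Since $\mathcal{B}$ is a basis, $y\in\overline{\{x\}}$ if and only if for every $k\in R^0$ with $y\in\sigma_k^0$ we have $x\in\sigma_k^0$. By Corollary~\ref{cor:supp_characterization} this reads:
\[
\text{for all } k\in R^0,\quad \supp(y)\cap\supp(k)=\emptyset \;\Longrightarrow\; \supp(x)\cap\supp(k)=\emptyset .
\]
We show this holds if and only if $\supp(x)\subseteq\supp(y)$.

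\emph{Sufficiency.} Suppose $\supp(x)\subseteq\supp(y)$ and $\supp(k)$ is disjoint from $\supp(y)$. Then $\supp(k)$ is also disjoint from the smaller set $\supp(x)$, so the implication holds.

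\emph{Necessity.} Suppose some class $[p_i]\in\supp(x)$ is not in $\supp(y)$. Take $k=p_i$. Then $\supp(k)=\{[p_i]\}$ is disjoint from $\supp(y)$, so $y\in\sigma_{p_i}^0$. But $[p_i]\in\supp(x)$, so $x\notin\sigma_{p_i}^0$. Hence $y\notin\overline{\{x\}}$.

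It remains to translate the condition $\supp(x)\subseteq\supp(y)$. It says exactly that $p_i\mid y$ for each $i=1,\dots,k$, i.e.\ $y\in\bigcap_{i}\langle p_i\rangle^0$. Here $\supp(x)=\{[p_1],\dots,[p_k]\}$ by unique factorization. The case of a single prime $p$ is the special case $k=1$, giving $\overline{\{p\}}=\langle p\rangle^0$.

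The argument is routine once Corollary~\ref{cor:supp_characterization} is available. The only point needing care is the necessity direction: we must exhibit a basic open set separating $y$ from $x$, and the natural choice is $\sigma_{p_i}^0$ for a prime $p_i$ dividing $x$ but not $y$. Applying the corollary to this $\sigma_{p_i}^0$ is where the principal ideal domain hypothesis is used; in a general unique factorization domain the reverse direction of the corollary can fail (Remark~\ref{rem:converse_fails}).
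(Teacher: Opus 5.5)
Your proof is correct. The paper gives no argument of its own here---it imports the lemma from \cite[Corollaries~3.2,\,3.5]{MaciasDomains}---so your route is genuinely different.

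You derive the lemma directly from Corollary~\ref{cor:supp_characterization}, reducing the closure condition to the inclusion $\supp(x)\subseteq\supp(y)$ and separating with $\sigma^0_{p_i}$. This makes Section~\ref{sec:classification} self-contained and uses only the prime-support framework. The cost is that it says nothing beyond principal ideal domains.

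The maximal-ideal viewpoint behind Remark~\ref{rem:macias_ortiz} does go further. In any integral domain, $y\in\overline{\{x\}}$ if and only if $y$ lies in every maximal ideal containing $x$. To see the nontrivial direction, suppose $x\in\m$ and $y\notin\m$ for a maximal ideal $\m$. Write $1=m+ry$ with $m\in\m$, and separate by $\sigma^0_m$, or by $\sigma^0_x$ when $m=0$; this is the same device as in the proof of Theorem~\ref{thm:jacobson}. Your formula is the case where those maximal ideals are $\langle p_1\rangle,\dots,\langle p_k\rangle$. The same description also yields $\overline{\{x\}}=\m^0$ in Remark~\ref{rem:closure_ufd_failure}.

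A minor point on your closing comment: your sufficiency step also passes through the reverse direction of the corollary, so the PID hypothesis is used there too. That inclusion in fact holds in any unique factorization domain. Indeed, $\supp(x)\subseteq\supp(y)$ gives $y^N\in\langle x\rangle$ for large $N$, and then $\langle k\rangle+\langle y\rangle=R$ forces $\langle k\rangle+\langle x\rangle=R$.
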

\begin{remark}\label{rem:closure_ufd_failure}
It is worth noting that Lemma~\ref{lem:closure_pid} does not, in general, hold for unique factorization domains. 
To observe this, consider the formal power series ring $R = \mathbb{C}[[x, y]]$, which is a local unique factorization domain with the unique maximal ideal $\m = \langle x, y \rangle$. 
Note that $x$ and $y$ are distinct, non-associate prime elements in $R$. 
If the formula in Lemma~\ref{lem:closure_pid} holds, then $\overline{\{x\}} = \langle x \rangle^0$, and consequently $y \notin \overline{\{x\}}$.
However, suppose $\sigma_k^0$ is any basic open set containing $y$. 
Then $\langle k \rangle + \langle y \rangle = R$. 
Since $y$ is a non-unit, $y \in \m$. 
If $k$ is also a non-unit, then $k \in \m$, forcing $\langle k \rangle + \langle y \rangle \subseteq \m \subsetneq R$, which contradicts $\langle k \rangle + \langle y \rangle = R$. 
Thus, $k$ must be a unit, which implies that $\sigma_k^0 = R^0$. Consequently, the only basic open set containing $y$ is the entire space $R^0$, which trivially contains $x$. 
This shows that $y \in \overline{\{x\}}$, and hence $\overline{\{x\}} \neq \langle x \rangle^0$.
\end{remark}
Now we introduce the notion of maximal proper single closure in Mac\'ias Space, which will be used in the proof of Theorem~\ref{thm:grand_classification}.
\begin{definition}\label{Maximal_proper_closure}
A subset $C \subsetneq R^0$ is called a \emph{maximal proper singleton closure} in $M(R)$ if there exists $x \in R^0$ with $C = \overline{\{x\}}$ and no singleton closure properly contains $C$ in $R^0$. 

We denote by $\mathcal{C}_R$ the collection of all such subsets.
\end{definition}
The remainder of this section is devoted to the proof of Theorem~\ref{thm:grand_classification}.
 \begin{proof}[Proof of Theorem~\ref{thm:grand_classification}]
We establish each direction separately.
 
Let $H\colon M(R) \to M(S)$ be a homeomorphism. We show that $|U(R)| = |U(S)|$ and $|\Pe_R| = |\Pe_S|$.

Since $H$ is a homeomorphism, we have $\overline{\{H(x)\}} = H\bigl(\overline{\{x\}}\bigr)$. Hence, by Lemma~\ref{lem:unit_iff}, if $x \in U(R)$, then $\overline{\{H(x)\}} = H(R^0) = S^0$. It follows that $H(x) \in U(S)$ by the same lemma. Thus $H(U(R))\subseteq U(S)$. Applying the same argument to $H^{-1}$ shows that $H^{-1}(U(S)) \subseteq U(R)$. Therefore $H(U(R))= U(S)$, and since $H$ is a bijection, $|U(R)| = |U(S)|$.
We note from Lemmas~\ref{lem:unit_iff} and~\ref{lem:closure_pid}, the possible singleton closures in $M(R)$ are:
\begin{itemize}
    \item $\overline{\{x\}} = R^0$ when $x$ is a unit, and
    \item $\overline{\{x\}} = \bigcap_{i=1}^{k} \langle p_i \rangle^0$ when $x$ is a non-unit with $\supp(x) = \{[p_1], \dots, [p_k]\}$.
\end{itemize}
Among the proper singleton closures (those strictly contained in $R^0$), the \emph{maximal} ones under set inclusion are those of the form $\langle p \rangle^0$ for a single prime $p$. To see this, note that if $k > 1$, then for each~$j$ we have the strict inclusion $\bigcap_{i=1}^{k} \langle p_i \rangle^0 \subsetneq \langle p_j \rangle^0$, since an element divisible only by $p_j$ (and not by $p_i$ for $i \neq j$) lies in $\langle p_j \rangle^0$ but not in $\bigcap_{i} \langle p_i \rangle^0$.
 
The preceding analysis shows that the collection $\mathcal{C}_R$ introduced in Definition~\ref{Maximal_proper_closure} can be written as
\[
\mathcal{C}_R = \bigl\{ \langle p \rangle^0 : [p] \in \Pe_R \bigr\},
\]
establishing a canonical bijection between $\mathcal{C}_R$ and $\Pe_R$, since two prime elements in a principal ideal domain generate the same ideal if and only if they are associates.
 
Since $H$ preserves closures and set inclusion, it sends maximal proper singleton closures in $M(R)$ to maximal proper singleton closures in $M(S)$. That is, $H$ induces a bijection $\mathcal{C}_R \to \mathcal{C}_S$. Composing with the canonical bijections $\mathcal{C}_R \leftrightarrow \Pe_R$ and $\mathcal{C}_S \leftrightarrow \Pe_S$ yields $|\Pe_R| = |\Pe_S|$.
 
Conversely, we assume $|U(R)| = |U(S)|$ and $|\Pe_R| = |\Pe_S|$. We show that $M(R)$ is homeomorphic to $M(S)$.

Fix a bijection $\varphi\colon \Pe_R \to \Pe_S$. This naturally induces a bijection $\hat{\varphi}\colon \operatorname{Fin}(\Pe_R) \to \operatorname{Fin}(\Pe_S)$.

Since $R$ is a principal ideal domain (hence a unique factorization domain), every element $x \in R^0$ has a well-defined prime support $\supp(x) \in \operatorname{Fin}(\Pe_R)$. This induces a partition $\mathcal{F} = \{F_A : A \in \operatorname{Fin}(\Pe_R)\}$ of $R^0$, where $F_A := \{x \in R^0 : \supp(x) = A\}$.
Similarly, with $G_B := \{y \in S^0 : \supp(y) = B\}$, the collection $\mathcal{G} = \{G_B : B \in \operatorname{Fin}(\Pe_S)\}$ partitions $S^0$.
 
For any non-empty $A = \{[p_1], \dots, [p_k]\} \in \operatorname{Fin}(\Pe_R)$, Lemma~\ref{lem:closure_pid} implies that each element of $F_A$ can be written uniquely as
\[
x = u \, p_1^{a_1} \cdots p_k^{a_k},
\]
with $u \in U(R)$ and integers $a_i \ge 1$ for all $i$.
The number of choices for the unit $u$ is $|U(R)|$, and the number of choices for the exponent tuple $(a_1, \dots, a_k) \in \N^k$ is $|\N^k| = \aleph_0$. Since $R$ is a principal ideal domain (and hence a unique factorization domain), distinct choices produce distinct elements, yielding
\[
|F_A| \;=\; |U(R)| \times \aleph_0, ~\text{for every non-empty } A\in\operatorname{Fin}(\Pe_R).
\]
Applying an analogous argument to the principal ideal domain $S$ yields $|G_B| = |U(S)| \times \aleph_0$ for every non-empty $B \in \operatorname{Fin}(\Pe_S)$.

 We note that $|F_{\emptyset}| = |U(R)|$ and $|G_{\emptyset}| = |U(S)|$. Since $|U(R)| = |U(S)|$, it follows that $|F_\emptyset| = |G_\emptyset|$. Moreover, for every non-empty $A \in \operatorname{Fin}(\Pe_R)$, we have
\[
|F_A| = |U(R)| \times \aleph_0 = |U(S)| \times \aleph_0 = |G_{\hat{\varphi}(A)}|.
\]

By the Axiom of Choice, we fix a bijection $h_A \colon F_A \to G_{\hat{\varphi}(A)}$ for each $A \in \operatorname{Fin}(\Pe_R)$. As the collections $\mathcal{F}$ and $\mathcal{G}$ partition $R^0$ and $S^0$ into pairwise disjoint subsets respectively, we get a well-defined global bijection
\[
H\colon R^0 \longrightarrow S^0, \qquad H(x) \;:=\; h_{\supp(x)}(x).
\]
By construction, $\supp(H(x)) = \hat{\varphi}(\supp(x))$ for every $x \in R^0$. 
Now to show that $H$ is a homeomorphism, it suffices to verify that $H$ and $H^{-1}$ each map basic open sets to basic open sets. Let $r \in R^0$. By Corollary~\ref{cor:supp_characterization}, the basic open set $\sigma_r^0$ is given by
\[
\sigma_r^0 \;=\; \{x \in R^0 : \supp(x) \cap \supp(r) = \emptyset\}.
\]
Since $\supp(H(x)) = \hat{\varphi}(\supp(x))$ and $\hat{\varphi}$ is a bijection, we have
\begin{align*}
H(\sigma_r^0) &= \bigl\{H(x) \in S^0 : \supp(x) \cap \supp(r) = \emptyset\bigr\} \\[4pt]
&= \bigl\{y \in S^0 : \hat{\varphi}^{-1}(\supp(y)) \cap \supp(r) = \emptyset\bigr\} \\[4pt]
&= \bigl\{y \in S^0 : \supp(y) \cap \hat{\varphi}(\supp(r)) = \emptyset\bigr\} \\[4pt]
&= \sigma_s^0,
\end{align*}
where $s \in S^0$ is any element with $\supp(s) = \hat{\varphi}(\supp(r))$. Thus $H$ sends basic open sets in $M(R)$ to basic open sets in $M(S)$. By symmetry (replacing $\varphi$ with $\varphi^{-1}$), $H^{-1}$ also sends basic open sets to basic open sets. Therefore $H$ is a homeomorphism.

This completes the proof.
\end{proof}
 

\section{Concluding Remarks}
 It is worth emphasizing that the principal ideal domain hypothesis is crucial to the methods of this article, since it simultaneously guarantees two key algebraic properties: Krull dimension one and a trivial ideal class group.

The Krull dimension one condition, under which every nonzero prime ideal is maximal, guarantees that the sum of the ideals generated by two elements with disjoint prime supports is the entire ring (Lemma~\ref{lem:support_reverse_pid}).
This property ensures that the closure of a singleton decomposes as the intersection of the principal ideals generated by its prime factors (Lemma~\ref{lem:closure_pid}); as noted in Remark~\ref{rem:closure_ufd_failure}, this decomposition fails in higher-dimensional unique factorization domains.
 
On the other hand, the triviality of the ideal class group guarantees unique factorization into prime elements rather than just into prime ideals. 
This property ensures that the prime support framework introduced in Section~\ref{sec:prelim} is well-defined and underlies the cardinality computations in the proof of Theorem~\ref{thm:grand_classification}, which in turn form the basis for the homeomorphism classification.
 
While Theorem~\ref{thm:density_equivalence} and Theorem~\ref{thm:grand_classification} are fully resolved for principal ideal domains, extending these results to broader classes of one-dimensional domains where unique factorization of elements may fail remains a substantial challenge. We pose the following open problem.
\begin{problem}
If $R$ is a Dedekind domain (which necessarily has Krull dimension one) with a non-trivial ideal class group, how does the structure of the class group influence the homeomorphism type of $M(R)$?
\end{problem}
\begin{acknow}
The first author is grateful for financial support in the form of Prime Minister’s Research Fellowship, Government of India (PMRF/2502403). The second author was supported by the Centre for Operator Algebras, Geometry, Matter and Spacetime, Ministry of Education, Government of India through Indian Institute of Technology Madras (Project no. SB22231267MAETWO008573).
\end{acknow}


\bibliographystyle{plain} 
\bibliography{references1}

@article{Furstenberg,
  AUTHOR = {Furstenberg, H.},
     TITLE = {On the infinitude of primes},
   JOURNAL = {Amer. Math. Monthly},
  FJOURNAL = {American Mathematical Monthly},
    VOLUME = {62},
      YEAR = {1955},
     PAGES = {353},
      ISSN = {0002-9890,1930-0972},
   MRCLASS = {10.0X},
  MRNUMBER = {68566},
       DOI = {10.2307/2307043},
       URL = {https://doi.org/10.2307/2307043},
       note={\url{https://doi.org/10.2307/2307043}},
}

@article{Golomb,
 AUTHOR = {Golomb, S. W.},
     TITLE = {A connected topology for the integers},
   JOURNAL = {Amer. Math. Monthly},
  FJOURNAL = {American Mathematical Monthly},
    VOLUME = {66},
      YEAR = {1959},
     PAGES = {663--665},
      ISSN = {0002-9890,1930-0972},
   MRCLASS = {10.00 (54.00)},
  MRNUMBER = {107622},
MRREVIEWER = {N.\ G.\ de Bruijn},
       DOI = {10.2307/2309340},
       URL = {https://doi.org/10.2307/2309340},
       note={\url{https://doi.org/10.2307/2309340}},
       
}

@article{Kirch,
 AUTHOR = {Kirch, A. M.},
     TITLE = {A countable, connected, locally connected {H}ausdorff space},
   JOURNAL = {Amer. Math. Monthly},
  FJOURNAL = {American Mathematical Monthly},
    VOLUME = {76},
      YEAR = {1969},
     PAGES = {169--171},
      ISSN = {0002-9890,1930-0972},
   MRCLASS = {54.40},
  MRNUMBER = {239563},
MRREVIEWER = {A.\ Lelek},
       DOI = {10.2307/2317265},
       URL = {https://doi.org/10.2307/2317265},
       note={\url{https://doi.org/10.2307/2317265}},
}

@article{MaciasIntegers,
 AUTHOR = {Mac\'ias, J.},
     TITLE = {Another topological proof of the infinitude of prime numbers},
   JOURNAL = {Integers},
  FJOURNAL = {Integers. Electronic Journal of Combinatorial Number Theory},
    VOLUME = {24},
      YEAR = {2024},
     PAGES = {Paper No. A47, 4},
      ISSN = {1553-1732},
   MRCLASS = {11A41 (11B05)},
  MRNUMBER = {4747711},
  DOI= {10.5281/zenodo.11221653},
  note={\url{https://doi.org/10.5281/zenodo.11221653}},
}

@article{MaciasDomains,
  AUTHOR = {Mac\'ias, J.},
     TITLE = {The {M}ac\'ias topology on integral domains},
   JOURNAL = {Topology Appl.},
  FJOURNAL = {Topology and its Applications},
    VOLUME = {357},
      YEAR = {2024},
     PAGES = {Paper No. 109070, 9},
      ISSN = {0166-8641,1879-3207},
   MRCLASS = {54A05 (54G05 54H11)},
  MRNUMBER = {4797249},
       DOI = {10.1016/j.topol.2024.109070},
       URL = {https://doi.org/10.1016/j.topol.2024.109070},
        note={\url{https://doi.org/10.1016/j.topol.2024.109070}}
       
}

@misc{MaciasOrtiz,
  AUTHOR = {Mac\'ias, J. and Ortiz, R.},
  TITLE = {A note on the {M}ac\'ias topology},
  YEAR = {2024},
  EPRINT = {2411.06670},
  ARCHIVEPREFIX = {arXiv},
  PRIMARYCLASS = {math.AT},
  NOTE = {\url{https://arxiv.org/abs/2411.06670}},
}
\end{document}